\newtheorem{nts}{Note to self}
\newcommand\highlight[1]{#1}
\newcommand\la{\lambda}
\renewcommand\l{\lambda}
\newcommand\wbar{\overline{w}}
\renewcommand\k{{\bf k}}
\newcommand\cM{\mathcal{M}}
\newcommand{\var}{\rm{Var}}
\newcommand{\zzeta}{\mathbf{Z}}
\newcommand\Kbar{\overline{K}}
\title{Counting polynomials over finite fields with given root multiplicities}
\author{Ayah Almousa and Melanie Matchett Wood}
\address{Department of Mathematics\\
University of Wisconsin-Madison \\ 480 Lincoln Drive \\
Madison, WI 53705 USA\\
and
American Institute of Mathematics\\360 Portage Ave \\
Palo Alto, CA 94306-2244 USA} 
\email{mmwood@math.wisc.edu}
\begin{document}
\begin{abstract}
 We give formulas for the number of polynomials over a finite field with given root multiplicities, in particular in cases when the formula is surprisingly simple (a power of $q$).  Besides this concrete interpretation, we also prove an analogous result on configuration spaces in the Grothendieck ring of varieties, suggesting new homological stabilization conjectures for configuration spaces of the plane.
\end{abstract}

\maketitle

\section{Introduction}
Given a finite field $\F_q$, a monic polynomial $f\in\F_q[x]$ factors into linear 
factors
$(x-\alpha_1)^{e_1}\cdots (x-\alpha_t)^{e_t}$ over the algebraic closure $\bar{\F_q}$ (with $\alpha_i\in \bar{\F_q}$ distinct).
To $f$, we can associate the partition $P(f)=e_0\cdots e_t$ (using multiplicative notation for partitions).  For a partition $\lambda$, we define
$$
w_\lambda:=\#\{\textrm{monic } f\in\F_q[x] | P(f)=\lambda\}.
$$
For example,
$w_{12}=q^2-q,$
where the subscript ``$12$'' denotes the partition with two elements $1,2$.  The number of square-free monic polynomials of degree $n\geq 2$ over $\F_q$ is $w_{1^n}=q^n-q^{n-1},$
a well-known fact.  

For two partitions $\lambda, \lambda'$, we define the refinement ordering $\lambda\leq \lambda'$ if $\lambda$ can be partitioned into subsets that add to the elements of $\lambda'$, so for example 
$11247\leq 357$
 (see Section~\ref{S:notation}).
We define
$$
\wbar_\lambda =\sum_{\lambda'\geq \lambda} w_{\lambda'},
$$
so for example $\wbar_{1^n}=q^n$ as $\{\lambda'\geq 1^n\}$ is the set of all partitions of $n$.
Also, $\wbar_{1^n2}=\wbar_{1^{n+2}}-w_{1^{n+2}}=q^{n+1}$.
We have that $\wbar_{1^nd}$ counts polynomials with at least one root with multiplicity at least $d$, and we will see that
$
\wbar_{1^nd}=q^{n+1}.
$
Similarly, $\wbar_{1^n22}$ counts polynomials with either at least two roots with multiplicity at least $2$ or 
at least one root with multiplicity at least $4$, and $\wbar_{1^n22}=q^{n+2}$.
As another example $\wbar_{12259}=q^{5}$.  These examples, and many more, lead to the natural conjecture that
\begin{equation}\label{E:conj}
\wbar_\lambda \stackrel{?}{=} q^{|\lambda|},
\end{equation}
for all $\lambda$, given for example as a comment to \cite{ellenberg-blog-Lconj}.  In fact,
this conjecture is false as $w_{11223}=q^5+q^2-q$, as pointed out in \cite[Section 2]{vakil-wood}.

In this paper, we address the question of when Equation~\eqref{E:conj} is true and we prove the following.
\begin{theorem}\label{T:main}
 For integers $m\geq -1,$ and $k\geq 0$, and $b_i,e_i\geq 1$ for $0\leq i\leq m$, such that each $1\leq i\leq m$, we have
$b_i\geq\sum_{j<i} e_jb_j$, we have
$$
 \wbar_{1^k{b_0}^{e_0}\cdots{b_m}^{e_m}} = q^{k+\sum_i e_i}.
$$ 
\end{theorem}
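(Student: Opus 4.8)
The plan is to reformulate $\wbar_\lambda$ as a coefficient of a generating function and to compute that function by induction on $m$. Set $\nu=b_0^{e_0}\cdots b_m^{e_m}$, so $\lambda=1^k\nu$. I would first show that $\wbar_{1^k\nu}$ equals the number of monic $f\in\F_q[x]$ of degree $k+|\nu|$ into which $\nu$ \emph{fits}, meaning that for suitable, not necessarily distinct, $\gamma_1,\dots\in\bar{\F_q}$ — one $\gamma_j$ for each part $c_j$ of $\nu$ — one has $\sum_j c_j[\gamma_j]\le\operatorname{div}(f)$ as effective divisors. This is a direct unwinding of the refinement order: $1^k\nu$ refines $P(f)$ exactly when $\deg f=k+|\nu|$ and $\nu$ fits into $\operatorname{div}(f)$, because once the parts of $\nu$ are assigned to roots of $f$, the $k$ ones automatically fill up the residual multiplicities and no further condition is needed. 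Consequently $\wbar_{1^k\nu}$ is the coefficient of $t^{k+|\nu|}$ in
$$
F_\nu(t)\;:=\;\sum_{f\text{ monic}}\bigl[\,\nu\text{ fits into }\operatorname{div}(f)\,\bigr]\,t^{\deg f},
$$
so Theorem~\ref{T:main} is equivalent to the identity $F_\nu(t)=q^{\,e_0+\cdots+e_m}\,t^{|\nu|}/(1-qt)$ whenever $b_i\ge\sum_{j<i}e_jb_j$ for $1\le i\le m$: the coefficient of $t^{k+|\nu|}$ is then $q^{\sum_i e_i}\cdot q^{k}$, and $F_\nu(t)$ has no terms of degree below $|\nu|$ because $\sum_j c_j[\gamma_j]\le\operatorname{div}(f)$ forces $|\nu|\le\deg f$.

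I would prove this identity by induction on $m$, the case $m=-1$ (empty $\nu$) being $F_\emptyset(t)=\sum_f t^{\deg f}=(1-qt)^{-1}$. For the inductive step, put $\nu'=b_0^{e_0}\cdots b_{m-1}^{e_{m-1}}$, $b=b_m$, $e=e_m$; since $|\nu|=|\nu'|+eb$ it suffices to prove the ``peeling'' identity
$$
F_{\nu'b^e}(t)\;=\;q^{e}\,t^{\,eb}\,F_{\nu'}(t).
$$
This is where the hypothesis is used, namely as $b\ge\sum_{j<m}e_jb_j=|\nu'|$: a single block of size $b$ is at least as large as all of $\nu'$ together.

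To prove the peeling identity I would set up a refined Euler product. Writing $f=\prod_P P^{a_P}$ over the monic irreducibles $P$, of degrees $d_P$, the multiset of root multiplicities of $f$ consists of $d_P$ bins of capacity $a_P$ for each $P$, and fitting $\nu'b^e$ into $\operatorname{div}(f)$ amounts to distributing the $e$ blocks of size $b$ together with the blocks of $\nu'$ among all of these bins. Because $b\ge|\nu'|$ one shows that such a distribution exists if and only if, after the blocks of $\nu'$ are placed as favorably as possible, the leftover capacity still contains $e$ disjoint chunks of size $b$; and that whether this holds is governed by the number $\sum_\rho\lfloor m_\rho(f)/b\rfloor=\sum_P d_P\lfloor a_P/b\rfloor$ of ``$b$-slots'' of $f$ in a manner decoupled from whether $\nu'$ fits. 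Tracking the number of $b$-slots by an auxiliary variable $z$, the local factor at $P$ is $\sum_{a\ge0}z^{d_P\lfloor a/b\rfloor}t^{ad_P}=\tfrac{1-t^{bd_P}}{(1-t^{d_P})(1-z^{d_P}t^{bd_P})}$, so the whole Euler product collapses to
$$
\sum_{f\text{ monic}}z^{\,\#\{b\text{-slots of }f\}}\,t^{\deg f}\;=\;\frac{1-qt^{b}}{(1-qt)(1-qzt^{b})}.
$$
Combining this with the generating function $F_{\nu'}$ for fitting $\nu'$ into the residual capacities and extracting the ``at least $e$ $b$-slots'' part, the coefficient extraction telescopes, exactly as in the single-value case
$$
F_{b^e}(t)\;=\;\frac{1}{1-qt}-\frac{1-qt^{b}}{1-qt}\sum_{j=0}^{e-1}(qt^{b})^{j}\;=\;\frac{q^{e}t^{be}}{1-qt},
$$
and one obtains $F_{\nu'b^e}(t)=q^{e}t^{eb}F_{\nu'}(t)$, completing the induction.

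The step I expect to be the main obstacle is making the decoupling in the previous paragraph precise: one must prove that, for superincreasing part sizes, ``$\nu'b^e$ fits into $\operatorname{div}(f)$'' is equivalent to a condition that cleanly separates the $e$ blocks of size $b$ (which then contribute the factor $q^{e}t^{eb}$) from $\nu'$, so that the Euler product really factors. No term-by-term bijection is available. The naive guess that $\nu'b^e$ fits into $\operatorname{div}(f)$ iff $f=H^bf'$ for some monic $H$ of degree $e$ with $\nu'$ fitting into $\operatorname{div}(f')$ is false already at the level of counts: for $f=(x^2+x+1)^3$ over $\F_2$ and $b=2$, the one block of size $2$ fits into $P(f)=\{3,3\}$ yet no $H\in\F_q[x]$ of degree $1$ satisfies $H^2\mid f$, and in general the number of such factorizations of a given $f$ is not the indicator that $\nu'b^e$ fits. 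Hence the argument must run through the generating-function cancellation, and the delicate point is the bookkeeping for the interaction between the $b$-blocks and the blocks of $\nu'$ — in particular the boundary case $b=|\nu'|$, where a root of multiplicity exactly $b$ could be used either for a $b$-block or to absorb all of $\nu'$.
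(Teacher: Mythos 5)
Your reformulation of $\wbar_{1^k\nu}$ as the number of monic $f$ of degree $k+(\text{sum of parts of }\nu)$ into which $\nu$ fits is correct, and your direct computation of $F_{b^e}(t)=q^et^{be}/(1-qt)$ via the two-variable Euler product is a clean and correct proof of the $m=0$ case (indeed a different route from the paper, which only establishes the $e=1$ case directly and reaches higher multiplicities through its induction). The problem is that the entire inductive step --- the peeling identity $F_{\nu'b^e}(t)=q^et^{eb}F_{\nu'}(t)$ for nonempty $\nu'$ --- rests on a ``decoupling'' that you explicitly leave open, and the version of it you describe is false. You claim that whether $\nu'b^e$ fits into $\mathrm{div}(f)$ ``is governed by the number $\sum_\rho\lfloor m_\rho(f)/b\rfloor$ of $b$-slots of $f$ in a manner decoupled from whether $\nu'$ fits.'' Take $\nu'=2$, $b=3$ (so $b$ strictly exceeds the weight of $\nu'$; this is not even your worrisome boundary case), $e=1$, and $f=(x-\alpha)^4$: then $f$ has $\lfloor 4/3\rfloor=1\geq e$ $b$-slots and $2$ fits into $4[\alpha]$, yet $2[\gamma_1]+3[\gamma_2]\leq 4[\alpha]$ is impossible, so $23$ does not fit. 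The $b$-blocks and the parts of $\nu'$ genuinely compete for capacity at a common root, and the number of $b$-chunks left over after placing $\nu'$ is not $\sum_\rho\lfloor m_\rho(f)/b\rfloor$.

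Even granting some corrected combinatorial characterization, there is a structural obstacle to the Euler-product strategy as sketched: the condition ``$\nu'$ fits into the residual multiplicities'' is a global condition on the multiset of multiplicities of $f$ and is not multiplicative over its irreducible factors, so the local factor at $P$ cannot record it, and the series $\sum_f z^{s_b(f)}[\,\nu'\text{ fits}\,]t^{\deg f}$ (where $s_b(f)=\sum_\rho\lfloor m_\rho(f)/b\rfloor$) does not factor as an Euler product. Your identity $\sum_f z^{s_b(f)}t^{\deg f}=(1-qt^{b})/((1-qt)(1-qzt^{b}))$ is right, but it ignores $\nu'$ entirely, and the assertion that ``the coefficient extraction telescopes, exactly as in the single-value case'' is essentially the content of the theorem rather than a consequence of what you have computed. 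For comparison, the paper resolves exactly this interaction by a different device: it peels off one copy of the \emph{smallest} part $b_0$ (rather than all copies of the largest), trades it for $b_0$ ones, and uses the hypothesis $b_i\geq\sum_{j<i}e_jb_j$ to prove a positional-number-system uniqueness statement (Lemma~\ref{L:key} and Corollary~\ref{C:inv}) yielding a multiplicity-sequence-preserving bijection between two differences of posets of partitions. Some such uniqueness-of-representation lemma is what your decoupling would have to supply, and it is the missing ingredient here.
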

This theorem proves a large number of cases of when Equation~\eqref{E:conj} holds, including those mentioned above.
In the special cases when $m=1$ or both $m=2$ and $e_0=1$, Theorem~\ref{T:main} follows from \cite[5.20]{vakil-wood}.
It is natural to consider the $\wbar_{1^k\lambda}$ together for varying $k$, because
they all count polynomials with multiple roots ``at least as bad'' as $\lambda$, as in the examples with $\lambda=d$
and $\lambda=22$ given above.
For any $\lambda$, it is the case that the limit
\begin{equation}\label{E:limit}
\lim_{k\ra\infty} \frac{\wbar_{1^k\lambda}}{q^k}
\end{equation}
exists \cite[Theorem 1.33]{vakil-wood}, but for the partitions $\lambda={b_0}^{e_0}\cdots{b_m}^{e_m}$ satisfying
the hypothesis of Theorem~\ref{T:main}, we see that
in fact $\frac{\wbar_{1^k\lambda}}{q^k}$ is independent of $k$.
%We do not know of any examples of $\lambda$  for which $\frac{\wbar_{1^k\lambda}}{q^k}$ is independent of $k$ that %are not explained by Theorem~\ref{T:main}.

To prove Theorem~\ref{T:main}, we start with the idea of the proof of  \cite[Lemma 5.18]{vakil-wood} and add new ideas
that allow us to extend well beyond the cases of $\lambda=1^kab^r$ that \cite{vakil-wood} could prove.
In \cite[5.20]{vakil-wood}, a stronger version of the theorem is proven, one about classes of configuration spaces
in the Grothendieck
ring of varieties over any field (see Section~\ref{S:groth}), and our proof works in that generality as well, resulting
in Theorem~\ref{T:groth}.  

The limits~\eqref{E:limit} have analogous limits for classes of configuration spaces in the Grothendieck ring of varieties, which have
very interesting connections to the homological stabilization of configuration spaces in topology (see \cite[1.41-1.50]{vakil-wood} for more details).  For example, if $\Conf^n X$ is the space of unordered $n$-tuples of distinct points on a 
manifold $X$, then the dimension of the $i$th rational homology group $h_i(\Conf^n X)$ stabilizes for $n$ sufficiently large (given $i$), a recent result of Church \cite{church} and Randal-Williams \cite{randal-williams} for closed manifolds and an older
result of McDuff \cite{McDuff75} for open manifolds.  
In the case that $X=\R^2$, this homological stability is an even older result of Arnol'd \cite{Arnold69}, and moreover,
Arnol'd shows that the $h_i(\Conf^n X)$ are independent of $n$ for $n\geq 2$.  Arnol'd's result is analogous to (could be predicted by) the fact that $w_{1^n}$, or equivalently $\wbar_{1^{n-2}2}$, is independent of $n$ for $n\geq 2$,
and further, the exact values of $h_i(\Conf^n \R^2)$ that Arnol'd gives could be predicted from the exact values of
$\wbar_{1^{n-2}2}$.

Let $\Conf^\lambda_c (X)$ denote the space of unordered tuples of points of a manifold $X$
whose multiplicity partition $\lambda'$ satisfies $\lambda'\not\geq \lambda$.  Informally, $\Conf^\lambda_c (X)$
is the complement of points with multiplicities that are $\lambda$ ``or worse'' (where the ``worse'' configurations are those in
the closure of the configurations with multiplicity $\lambda$).
For example, $\Conf^{1^{n-2}2}_c (X)=\Conf^n X,$ and $\Conf^{1^{n-d}d}_c (X)$ is the space
of unordered sets of $n$ points of $X$ in which all points appear with multiplicity at most $d-1$.
Theorem~\ref{T:main} (and Equation~\eqref{E:simple}) then motivates the following topological conjecture, extending \cite[1.43 Conjecture E]{vakil-wood}.
\begin{conjecture}\label{C:top}
 For integers $m,k\geq 0$, and $b_i,e_i,d\geq 1$ for $0\leq i\leq m$, such that for all $i$,
$b_i\geq\sum_{j<i} e_jb_j$,
we have 
$$
h_\ell(\Conf^{1^k{b_0}^{e_0}\cdots{b_m}^{e_m}}_c(\R^{2d}))=
\begin{cases}
1 &\textrm{$\ell=0$ or $\ell=2d(\sum_i e_i(b_i-1))-1$}\\
0 &\textrm{otherwise.}\
\end{cases}
$$
\end{conjecture}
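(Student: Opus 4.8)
The plan is to deduce Conjecture~\ref{C:top} from the arithmetic statement of Theorem~\ref{T:main} (in its Grothendieck-ring form, Theorem~\ref{T:groth}) together with the standard dictionary relating point counts over $\F_q$, classes in the Grothendieck ring of varieties, and rational (co)homology of configuration spaces of $\R^{2d}$, along the lines of \cite[1.41--1.50]{vakil-wood}. Concretely, the complement configuration space $\Conf^\lambda_c(X)$ over $X=\A^1$ has, by Theorem~\ref{T:groth}, class equal to $\L^{k+\sum_i e_i}$ or a sum of such monomials, reflecting the equality $\wbar_{1^k b_0^{e_0}\cdots b_m^{e_m}}=q^{k+\sum_i e_i}$; passing to $\R^{2d}$ replaces each power $\L$ of the Lefschetz motive by a cohomology class in degree $2d$, so one expects the cohomology to be concentrated in the bottom and top degrees predicted, namely $0$ and $2d\bigl(\sum_i e_i(b_i-1)\bigr)-1$.

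First I would set up the comparison carefully: fix the $n$ with $n=k+\sum_i e_i b_i$ so that $\Conf^{1^k b_0^{e_0}\cdots b_m^{e_m}}_c(\R^{2d})$ is an open subvariety-like space built from the configuration space of $n$ points, and recall (as in \cite{vakil-wood}) that for $X=\R^{2d}$ the rational homology of the relevant configuration complements is computed by the same combinatorial/homological machinery (e.g. the Leray or Mayer--Vietoris spectral sequence associated to the stratification by multiplicity partitions) whose ``motivic shadow'' is exactly the computation proving Theorem~\ref{T:groth}. The key point is that the inductive argument proving Theorem~\ref{T:main}/Theorem~\ref{T:groth} --- which, per the excerpt, starts from the idea of \cite[Lemma 5.18]{vakil-wood} and adds new ideas to handle the general $b_i\geq\sum_{j<i}e_j b_j$ hypothesis --- is itself built out of fibration/stratification steps that have topological analogues over $\R^{2d}$. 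So the second step is to transcribe each step of that induction into a statement about $h_\ell(\Conf^{(-)}_c(\R^{2d}))$, using that the base cases (e.g.\ $\Conf^n\R^{2d}$, Arnol'd-type for $d=1$ and its higher analogues) already have the predicted concentration in degrees $0$ and $2d(\sum e_i(b_i-1))-1$.

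The third step is to track the cohomological degree shift: each factor $b_i^{e_i}$ with the "spread out" hypothesis contributes, in the motivic world, a power of $\L$ of size $e_i$, but in the topological world a configuration of $e_i$ points each of multiplicity $b_i$ sits in the closure of a stratum of real codimension $2d\,e_i(b_i-1)$ (the $b_i$ equal points contribute $b_i-1$ "collisions," each of real codimension $2d$, per cluster, times $e_i$ clusters). The top nonzero homology of the complement of this closed stratum is then in degree one less than its codimension, i.e. $2d\sum_i e_i(b_i-1)-1$, matching the conjecture; the $\ell=0$ class is just connectedness of the complement. One should check that the "new ideas" beyond \cite{vakil-wood} --- presumably an ordering/filtration argument exploiting $b_i\geq\sum_{j<i}e_j b_j$ so that the clusters of size $b_i$ dominate all smaller ones and the contributions do not interact --- force the spectral sequence to degenerate so that no intermediate-degree classes survive.

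The main obstacle I expect is exactly this degeneration/no-intermediate-classes claim: over a finite field the point count only sees the Euler characteristic (alternating sum), so $\wbar_\lambda=q^{k+\sum e_i}$ is consistent with many cohomology profiles; to pin down that the homology is literally concentrated in two degrees one needs the stronger motivic statement (class $=\L^{k+\sum e_i}$ on the nose, or a sum of powers of $\L$ with the right single "gap"), and then a purity or formality argument --- e.g.\ that these configuration complements of $\R^{2d}$ are rationally formal with cohomology generated by the relevant top class, as for $\Conf^n\R^{2d}$ --- to upgrade the motivic identity to the topological one. Making this purity input rigorous in the generality of Conjecture~\ref{C:top} (rather than verifying it case by case) is the crux, which is presumably why it is stated as a conjecture rather than a theorem; a reasonable fallback is to prove it unconditionally for $d=1$ (where Arnol'd's computation and the known structure of $H^*(\Conf^n\R^2)$ give the needed input) and for the small cases $m\leq 1$, leaving the general case as the conjecture.
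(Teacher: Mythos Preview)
The statement you are trying to prove is labeled a \emph{Conjecture} in the paper, and the paper contains no proof of it whatsoever. The paper's role for Conjecture~\ref{C:top} is purely motivational: Theorem~\ref{T:main} and its Grothendieck-ring refinement (Theorem~\ref{T:groth}, in particular Equation~\eqref{E:simple}) are offered as evidence that the topological statement \emph{should} hold, by analogy with the dictionary in \cite[1.41--1.50]{vakil-wood}. The only cases recorded as actually known are $b_0^{e_0}\cdots b_m^{e_m}=2$ (Arnol'd), $b_0^{e_0}\cdots b_m^{e_m}=b_0$ (Randal-Williams), and $b_0^{e_0}\cdots b_m^{e_m}=b_0^{e_0}$ (via Arnol'd's later work). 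Everything else is explicitly left open.

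Your proposal is therefore not comparable to any proof in the paper; it is an attempt to prove an open problem. You yourself correctly locate the genuine gap: the motivic identity $\wbar_{1^k b_0^{e_0}\cdots b_m^{e_m}}(\A^d)=[\A^{d(k+\sum_i e_i)}]$ constrains only an Euler-characteristic-type invariant of the corresponding topological space, and upgrading this to the full statement that $h_\ell$ vanishes in all intermediate degrees requires an independent input (purity, formality, or an explicit spectral-sequence degeneration) that is not supplied by the inductive combinatorics of Theorem~\ref{T:main}. The steps of that induction (the bijection in \eqref{E:map}, Lemmas~\ref{L:1kdformal} and~\ref{L:pr}) are identities of classes in $K_0(\mathrm{Var}_{\k})$ or of point counts, and they do \emph{not} transcribe into fibration or stratification statements over $\R^{2d}$ in any way the paper establishes; in particular there is no claim that the relevant spectral sequence degenerates. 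Your fallback of treating only $d=1$ or $m\leq 1$ is reasonable, but even there you would be reproving the known cases rather than supplying a new argument, and the general conjecture remains open.
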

In the case when $d=1$ and ${b_0}^{e_0}\cdots{b_m}^{e_m}=2$, this is Arnol'd's theorem \cite{Arnold69},
O. Randall-Williams has shown the conjecture for any $d$ when ${b_0}^{e_0}\cdots{b_m}^{e_m}=b_0$, 
and according to T. Church, Arnol'd's work \cite{Arnold70} can be used to show the conjecture for arbitrary $d$ when 
${b_0}^{e_0}\cdots{b_m}^{e_m}=b_0^{e_0}$ (see \cite[Section 1.44]{vakil-wood}). 
Our conjecture goes well beyond the cases that are currently known, and  the recently proven cases were motivated
by \cite[1.43 Conjecture E]{vakil-wood}, a special case of our conjecture, made for the same reasons.

\subsection{Further directions}
It would be interesting to have a complete classification for which $\lambda$ we have that $\frac{\wbar_{1^k\lambda}}{q^k}$ is independent of $k$
(perhaps for $k$ sufficiently large).   Further, we are curious whether the classification is the same as when the dimensions of the homology groups of the analogous configuration spaces are independent of $k$. 
We are also particularly curious as to whether there are examples in which
$\frac{\wbar_{1^k\lambda}}{q^k}$ is independent of $k\gg 0$ but not a power of $q$.
The question of counting polynomials is the case of counting points on the affine line (which gives $X=\A^1_\C=\R^2$ in the topological analog), and we are curious for what other spaces and partitions $\lambda$ does counting points with multiplicity $\lambda$ or worse give this independence in $k$.

%Add other conjectures? patterns noticed?

\subsection{Outline of the paper}
In Section~\ref{S:notation} we specify our notation for the paper.  In Section~\ref{S:main}, we prove Theorem~\ref{T:main}.
Finally, in Section~\ref{S:groth}, we give the refinement of  Theorem~\ref{T:main} that we have proven on configuration spaces
of any variety in the Grothendieck ring of varieties over any field.

\subsection*{Acknowledgements} 
The first author was supported by National Science Foundation grant DMS-1147782 and the second author was supported by American Institute of Mathematics Five-Year Fellowship and National Science Foundation grant DMS-1147782.

\section{Notation}\label{S:notation}
In this paper, a \emph{partition} $\lambda$ is a multiset, and we use a multiplicative notation so that
$\lambda=a_1^{e_1}\cdots a_m^{e_m}$ is the multiset in which $a_i$ occurs $e_i$ times.  (We avoid two-digit numbers
so that, for example, $\lambda=12$ is the two element multiset including the elements $1$ and $2$.)
We let $|\lambda|=\sum_i e_i$ is the size of the multiset.

Suppose $\lambda,\lambda',$ and $\pi$ are partitions.
If $x,y,z$ are elements with $x+y=z$ such that $\lambda=xy\pi$ and $\lambda'=z\pi$, we
say $\lambda'$ is an {\em elementary merge} of $\lambda$.
In this case $|\lambda|=1+|\lambda'|$. 
We define the {\em refinement ordering} $<$ on 
partitions as generated by elementary merges.  (If $\lambda'$ is
an elementary merge of $\lambda$, then 
$\lambda < \lambda'$.)
 For example, $123 < 3^2 < 6$.  We write  $\la \leq \la'$ if $\la < \la'$
or $\la  = \la'$.

If $\lambda=a_1^{e_1}\cdots a_m^{e_m}$ with the $a_i$ distinct, we could (equivalently to the above) define
$w_\lambda$ to be the number of $m$-tuples $(f_1,\dots,f_m)$ in which $f_i$ is a square-free monic polynomial in $\F_q[x]$ of degree $e_i$ and the $f_i$ are pairwise relatively prime.  
(We can associate to $(f_1,\dots,f_m)$ the polynomial $\prod_i f_i^{a_i}$ with partition $\lambda$).
Again equivalently, we could define
$w_\lambda$ to be the number of assignments  to each monic irreducible $f\in \F_q[x]$
an integer $n_f$ between $0$ and $m$, inclusive, so that $\sum_{f \textrm{ with } n_f=i} \deg(f)=e_i$ for all $i\geq 1$. 
(We can associate such an assignment to a tuple $(f_1,\dots,f_m)$ with $f_i=\prod_{f \textrm{ with } n_f=i} f$.)
In this way we can define $w_\lambda$ for the $a_i$ in any set, not just for $a_i$ positive integers.
Further, we note that $w_\lambda$ only depends on the \emph{multiplicity sequence $e_i$ of $\lambda$}.

\section{Proof of Theorem~\ref{T:main}}\label{S:main}
First we need two lemmas.

\begin{lemma}\label{L:1kdformal}
If $A$ is a formal variable, we have $\wbar_{A^k(bA)}=q^{k+1}$ for all $k\geq 0$ and $b\geq 1$.
\end{lemma}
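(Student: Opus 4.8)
The plan is to rewrite $\wbar_{A^k(bA)}$ as a difference of two counts of monic polynomials over $\F_q$, in the spirit of \cite[Lemma 5.18]{vakil-wood}, and to evaluate the second count using the zeta function of the affine line. First I would unwind the definition of $\wbar$. Since $A$ is a formal variable, every elementary merge applied to $A^k(bA)$ either combines copies of $A$, producing a part $cA$ with $c\geq 2$, or combines the part $bA$ with some of the copies of $A$, producing $(b+j)A$ with $j\geq 0$. Keeping track of how many of the $k$ copies of $A$ are absorbed into the $bA$-block and how the rest are grouped, one checks that $\{\lambda' : \lambda'\geq A^k(bA)\}$ is exactly the set of partitions $(c_1A)\cdots(c_rA)$ for which $\{c_1,\dots,c_r\}$ is an integer partition of $k+b$ having at least one part $\geq b$; moreover the multiplicity sequence of $(c_1A)\cdots(c_rA)$ is that of the integer partition $\{c_1,\dots,c_r\}$, so, since $w$ depends only on the multiplicity sequence,
$$
\wbar_{A^k(bA)}=\sum_{\substack{\mu\vdash k+b\\ \mu\ \text{has a part}\ \geq b}} w_\mu
=\Bigl(\sum_{\mu\vdash k+b} w_\mu\Bigr)-\sum_{\substack{\mu\vdash k+b\\ \text{all parts of }\mu\ \leq\ b-1}} w_\mu .
$$
The first sum ranges over all partitions of $k+b$, so it equals $\wbar_{1^{k+b}}=q^{k+b}$ (equivalently, it counts all monic polynomials of degree $k+b$ over $\F_q$).

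Next I would identify the subtracted sum. For a monic $f=\prod_P P^{e_P}$ (the product over monic irreducible $P\in\F_q[x]$), each root of $P$ in $\bar{\F_q}$ has multiplicity $e_P$ in $f$, so the multiplicity partition $P(f)$ has all parts $\leq d$ exactly when $e_P\leq d$ for every $P$. Hence $\sum_{\mu\vdash N,\ \text{parts}\leq d} w_\mu$ equals the number $M_d(N)$ of monic degree-$N$ polynomials over $\F_q$ all of whose root multiplicities are $\leq d$, and, by unique factorization,
$$
\sum_{N\geq 0} M_d(N)\, t^N=\prod_P\bigl(1+t^{\deg P}+\cdots+t^{d\deg P}\bigr)=\prod_P\frac{1-t^{(d+1)\deg P}}{1-t^{\deg P}} .
$$
The identity $\prod_P(1-u^{\deg P})=1-qu$ (which is just the reciprocal of $\sum_{N\geq0}q^N u^N=\prod_P(1-u^{\deg P})^{-1}$, counting monic polynomials by degree) applied with $u=t^{d+1}$ and with $u=t$ collapses the right-hand side to $(1-qt^{d+1})\sum_{N\geq 0}q^N t^N$, giving $M_d(N)=q^N-q^{N-d}$ for $N\geq d+1$.

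Finally, setting $d=b-1$ and $N=k+b$ — which satisfies $N\geq d+1$ because $k\geq 0$ — gives the subtracted sum $M_{b-1}(k+b)=q^{k+b}-q^{k+1}$, and substituting into the displayed identity yields $\wbar_{A^k(bA)}=q^{k+b}-(q^{k+b}-q^{k+1})=q^{k+1}$. The only delicate step is the first one: correctly enumerating the merges $\lambda'\geq A^k(bA)$ and checking that passing to multiplicity sequences turns the $w_{\lambda'}$ into ordinary partition counts — this uses crucially that $A$ is formal, so that different groupings never merge "by accident" — while the remaining computations are the familiar zeta function of $\A^1_{\F_q}$.
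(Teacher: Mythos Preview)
Your proof is correct and follows essentially the same approach as the paper: reduce $\wbar_{A^k(bA)}$ to $\wbar_{1^kb}$ (by dividing every part by $A$, equivalently by your observation that the merges of $A^k(bA)$ are exactly the integer partitions of $k+b$ with a part $\geq b$), then write $\wbar_{1^kb}=q^{k+b}-c_{k+b}$ where $c_n$ counts monic degree-$n$ polynomials with all root multiplicities $\leq b-1$. The only cosmetic difference is in how the generating function $\sum_n c_n t^n=(1-qt^b)/(1-qt)$ is derived: the paper uses the unique factorization $f=g\cdot h^b$ with $g$ having root multiplicities $\leq b-1$, whereas you use the Euler product $\prod_P(1+t^{\deg P}+\cdots+t^{(b-1)\deg P})$ together with $\prod_P(1-u^{\deg P})=1-qu$; both arrive at the same formula in one line.
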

\begin{proof}
We have  
$$
\wbar_{A^k(bA)}=\sum_{\lambda\geq A^k(bA)} w_{\lambda},
$$
and by dividing each element of each $\lambda$ in the sum by $A$, we see that
$\wbar_{A^k(bA)}=\wbar_{1^kb}$, which is $q^{k+1}$ \cite[Proposition 5.9(b)]{vakil-wood}.  We give a proof here for
completeness.
Let $c_n$ be the number of monic polynomials in $\F_q[x]$ of degree $n$ in which every root appears with multiplicity
at most $b-1$, and $d_n=q^n$ be the number of monic polynomials in $\F_q[x]$ of degree $n$.  Since we can factor any monic $f\in \F_q[x]$ uniquely as $g(x)h(x)^b$, so that
every root of $g$ has multiplicity at most $b-1$ and $g(x)$ and $h(x)$ are both monic, we have an equality of generating functions
$$
(1-tq)^{-1}=\sum_{n} d_nt^n=\sum_{n,m} c_md_n t^{m+bn}.  
$$
Thus
$
\sum_{m} c_m t^m = (1-t^bq)/(1-tq),
$
and so $c_n=q^{n}-q^{n-b+1}$ for $n\geq b$.
The lemma follows because  $\wbar_{1^kb}=\wbar_{1^{k+b}}-c_{k+b}=q^{k+1}$.
\end{proof}

\begin{lemma}[Formal product rule]\label{L:pr}
Let $A,B_0,\dots,B_m$ be formal variables and $b_0,e_0,\dots e_m$ be integers at least $1$.  Then
$$
\wbar_{A^k (b_0A) B_0^{e_0-1} B_1^{e_1}\dots B_m^{e_m}}=
\wbar_{A^k (b_0A)} \wbar_{B_0^{e_0-1}} \wbar_{ B_1^{e_1}}\dots  \wbar_{B_m^{e_m}}.
$$
\end{lemma}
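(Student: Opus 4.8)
The plan is to express $\wbar$ of any partition whose parts are nonnegative integer combinations of the formal variables $A,B_0,\dots,B_m$ (so $b_0A$ means $b_0$ times $A$) as a count of functions on the set of monic irreducible polynomials of $\F_q[x]$, and then to show that this count factors once we isolate the constraint coming from the single ``heavy'' part $b_0A$. Write $\lambda=A^k(b_0A)B_0^{e_0-1}B_1^{e_1}\cdots B_m^{e_m}$ and let $|\lambda|=(k+b_0)A+(e_0-1)B_0+\sum_{i=1}^m e_iB_i$ be the formal sum of its parts. Using the factorization description of $w_{\lambda'}$ from Section~\ref{S:notation}, the data counted by $w_{\lambda'}$ amounts to an assignment, to each distinct value $v$ appearing among the parts of $\lambda'$, of a square-free monic polynomial $g_v$ whose degree equals the multiplicity of $v$ in $\lambda'$, the $g_v$ pairwise coprime. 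Recording, for each monic irreducible $\pi$, the unique $v$ with $\pi\mid g_v$ (if any) identifies the pairs $(\lambda',(g_v))$ with $\lambda'\geq\lambda$ with the finitely supported functions $\phi$ from monic irreducibles to nonnegative integer combinations of $A,B_0,\dots,B_m$ such that (i) $\sum_\pi \deg(\pi)\,\phi(\pi)=|\lambda|$ as a formal identity, and (ii) the partition $\mu(\phi)$, in which a value $v$ occurs with multiplicity $\sum_{\pi\,:\,\phi(\pi)=v}\deg(\pi)$, satisfies $\mu(\phi)\geq\lambda$. Thus $\wbar_\lambda$ is the number of such $\phi$.

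The step I expect to be the main obstacle is making condition (ii) explicit. I would prove that a partition $\mu$ whose parts are nonnegative integer combinations of the variables, with $\sum(\text{parts of }\mu)=|\lambda|$, satisfies $\mu\geq\lambda$ if and only if some part of $\mu$ has $A$-coefficient at least $b_0$. A merge of $\lambda$ groups the parts $A,\dots,A$ ($k$ copies), $b_0A$, $B_0,\dots,B_m$ into blocks whose values are the parts of $\mu$; since $A,B_0,\dots,B_m$ are independent, the number of copies of each $B_i$ and of $A$ placed in a given block is forced by the coefficients of the corresponding part of $\mu$, the only rigidity being that the single part $b_0A$ lies entirely in one block, whose value then has $A$-coefficient $\geq b_0$. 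Conversely any such $\mu$ is realizable: place $b_0A$ in a block of that value, and distribute the $k$ copies of $A$ and the $B_i$'s according to the coefficients of the parts of $\mu$ (the count of leftover copies of $A$ works out automatically from $\sum(\text{parts of }\mu)=|\lambda|$). Since a value of $\mu(\phi)$ with $A$-coefficient $\geq b_0$ is the same as a $\pi$ with $\phi(\pi)$ of $A$-coefficient $\geq b_0$, this exhibits $\wbar_\lambda$ as the number of $\phi$ satisfying (i), minus the number of $\phi$ satisfying (i) for which $\phi(\pi)$ has $A$-coefficient at most $b_0-1$ for every $\pi$.

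I would then compute both counts. A finitely supported $\phi$ valued in nonnegative integer combinations of $A,B_0,\dots,B_m$ is the same as its tuple of coordinate functions, and (i) is equivalent to the independent conditions $\sum_\pi\deg(\pi)\phi_A(\pi)=k+b_0$, $\sum_\pi\deg(\pi)\phi_{B_0}(\pi)=e_0-1$, and $\sum_\pi\deg(\pi)\phi_{B_i}(\pi)=e_i$ for $1\leq i\leq m$. By unique factorization, a finitely supported function $\psi$ from the monic irreducibles to the nonnegative integers with $\sum_\pi\deg(\pi)\psi(\pi)=n$ is exactly the exponent data of a monic polynomial of degree $n$, so there are $q^n$ of them; hence the number of $\phi$ satisfying (i) is $q^{k+b_0}\,q^{e_0-1}\prod_{i=1}^m q^{e_i}$. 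For the subtracted count the $B$-coordinate functions are unchanged, while $\phi_A$ ranges over the exponent data of monic degree-$(k+b_0)$ polynomials all of whose root multiplicities are at most $b_0-1$; by the generating-function computation in the proof of Lemma~\ref{L:1kdformal} there are $c_{k+b_0}=q^{k+b_0}-q^{k+1}$ of these. Subtracting gives $\wbar_\lambda=q^{k+1}\,q^{e_0-1}\prod_{i=1}^m q^{e_i}$, and I would finish by recognizing the factors: $q^{k+1}=\wbar_{A^k(b_0A)}$ by Lemma~\ref{L:1kdformal}, while $q^{e_0-1}=\wbar_{B_0^{e_0-1}}$ and $q^{e_i}=\wbar_{B_i^{e_i}}$ because $\wbar_{B^e}=\wbar_{1^e}=q^e$ (divide each part by $B$, as in the proof of Lemma~\ref{L:1kdformal}, and use $\wbar_{1^n}=q^n$). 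The degenerate cases — such as $e_0=1$, where $B_0^{e_0-1}$ is the empty partition and $\wbar_{B_0^{e_0-1}}=1$ — are included.
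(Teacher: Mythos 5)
Your proof is correct, and its engine is the same as the paper's: you identify $\wbar_{A^k(b_0A)B_0^{e_0-1}\cdots B_m^{e_m}}$ with the number of finitely supported assignments, to each monic irreducible, of a nonnegative integer combination of $A,B_0,\dots,B_m$ subject to the degree-sum constraints, and you correctly isolate the refinement condition $\mu(\phi)\geq\lambda$ as ``some value has $A$-coefficient at least $b_0$'' --- this is exactly conditions 1)--3) in the paper's proof, and your realizability argument for the converse direction is the substance that the paper leaves implicit. Where you diverge is the finish: the paper never evaluates either side, but instead observes that the right-hand side counts the \emph{same} set of assignments (the coordinate functions $\phi_A,\phi_{B_0},\dots,\phi_{B_m}$ are independent, and the constraint ``some $\phi_A(\pi)\geq b_0$'' involves only the $A$-coordinate, whose constrained count is precisely $\wbar_{A^k(b_0A)}$), so the identity is a bijection of counted sets. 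You instead compute both sides explicitly as powers of $q$, using the generating-function computation of $c_{k+b_0}$ from Lemma~\ref{L:1kdformal}. Both are valid proofs of the lemma as stated. The practical difference is that the paper's bijective phrasing transfers verbatim to the Grothendieck ring of varieties (replacing counts of polynomials by classes of symmetric powers), which is what is invoked in the proof of Theorem~\ref{T:groth}; your numerical evaluation is tied to $\F_q$-point counting and the fact that $\Sym^n\A^1$ has $q^n$ points, so it would need to be rephrased (essentially back into the bijective form, with $q^n$ replaced by $[\Sym^n X]$ and $c_n$ by the corresponding motivic class) to yield the motivic version.
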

\begin{proof}
We see that each side counts the following: the number of ways to assign to each irreducible monic polynomial $f\in \F_q[x]$
a tuple $(a_fA,b_{f,0}B_0,\dots,b_{f,m}B_m)$ such that 1)$a_f,b_{f,i}$ are non-negative integers,
2)$\sum_f a_f\deg(f)=k+b_0$ and $\sum_f b_{f,0}\deg(f)=e_0-1$ and $\sum_f b_{f,i}\deg(f)=e_i$ for $i>0$, and
3)at least one $a_f$ is at least $b_0$.
On the left-hand side of the lemma, such as assignment corresponds to one element counted by
$w_\lambda$, where $\lambda$ contains the element $nA+n_0B_0+\dots+n_mB_m$ exactly $e$ times, where
$e=\sum_{f \textrm{ with } a_f=n,b_{f,i}=n_i \textrm{ for all $i$}} \deg(f)$.  The element counted is the tuple composed of the square-free polynomials $\prod_{f \textrm{ with } a_f=n,b_{f,i}=n_i \textrm{ for all $i$}} f$.
On the right-hand side of the lemma, such as assignment corresponds to an $m+2$ tuple of elements counted by
$w_{\lambda},w_{\lambda_0},\dots w_{\lambda_m}$, respectively, where, 
$\lambda$ contains the element $nA$ exactly $e$ times, where
$e=\sum_{f \textrm{ with } a_f=n} \deg(f),$ and for all $i$, we have that $\lambda_i$ contains the element $n_iB_i$ exactly $e_i$ times, where
$e_i=\sum_{f \textrm{ with } b_{f,i}=n_i } \deg(f)$. 
\end{proof}

 We fix integers $m\geq -1$, and $k\geq 0$, and $b_i,e_i\geq 1$ for $0\leq i\leq m$, such that for all $i$,
\begin{equation}\label{E:cond}
b_i\geq\sum_{j<i} e_jb_j.
\end{equation}
We will now prove Theorem~\ref{T:main} by induction on $\sum_i e_i$, where the base case $m=-1$ and $\sum_i e_i=0$ is clear.
If $\mu$ is a partition, let $\mathcal R_\mu$ be the set of partitions
$\geq \mu$.
The map $\phi$ that sends $A\mapsto 1$ and $B_i \mapsto b_i$ for formal variables $A,B_i$ induces a map
of posets
$$
\mathcal R_{A^{k+b_0}B_0^{e_0-1}B_1^{e_1}\cdots B_m^{e_m}}\ra \mathcal R_{1^{k+b_0}b_0^{e_0-1}b_1^{e_1}\cdots b_m^{e_m}}.
$$
We will see that $\phi$ restricts to a
{\em bijection} 
\begin{equation}\label{E:map}
\mathcal R_{A^{k+b_0}B_0^{e_0-1}B_1^{e_1}\cdots B_m^{e_m}}\setminus 
\mathcal R_{A^{k}(b_0A)B_0^{e_0-1}B_1^{e_1}\cdots B_m^{e_m}}
\ra \mathcal R_{1^{k+b_0}b_0^{e_0-1}b_1^{e_1}\cdots b_m^{e_m}}\setminus
\mathcal R_{1^{k}b_0^{e_0}b_1^{e_1}\cdots b_m^{e_m}},
\end{equation} 
and that this bijection preserves the multiplicity sequence of each partition.  
We let $\pi_i$ be the map on integers that is reduction to standard representatives modulo $b_i$, and
note that it induces a map on partitions of integers.

\begin{lemma}\label{L:key}
If $0\leq r\leq b_0-1$, and $0\leq s_0\leq e_0-1,$ and $0\leq s_i\leq e_i$ for $i\geq 1$, then
  if we successively apply
$\pi_m,\dots, \pi_l$ to $\phi(rA+\sum_i s_iB_i)$, the
 the map $\pi_l$ reduces $\pi_{l+1}\circ\cdots\circ \pi_m \circ \phi(rA+\sum_i s_iB_i )=
 r+\sum_{i\leq l} s_ib_i
  $
 by exactly $s_lb_l$.
\end{lemma}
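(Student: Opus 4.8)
The plan is to prove the statement by downward induction on $l$, from $l=m$ to $l=0$. The inductive claim at stage $l$ is the asserted identity
$$
\pi_{l+1}\circ\cdots\circ\pi_m\circ\phi\Bigl(rA+\sum_i s_iB_i\Bigr)=r+\sum_{i\leq l}s_ib_i,
$$
and along the way one reads off that $\pi_l$ decreases this value by exactly $s_lb_l$. Since $\phi(rA+\sum_i s_iB_i)=r+\sum_{i=0}^m s_ib_i$ by the definition of $\phi$, the case $l=m$ is immediate, the composition being then empty. For the inductive step I would assume the claim at stage $l$ and apply $\pi_l$; the point is that $\pi_l\bigl(r+\sum_{i\leq l}s_ib_i\bigr)=r+\sum_{i<l}s_ib_i$, which is simultaneously the claim at stage $l-1$ and the assertion that $\pi_l$ reduces the value by $s_lb_l$.

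To verify that computation, observe that $s_lb_l$ is a multiple of $b_l$, so it suffices to check that $r+\sum_{i<l}s_ib_i$ is already the standard representative modulo $b_l$, i.e.\ that $0\le r+\sum_{i<l}s_ib_i\le b_l-1$. The lower bound is clear since $r,s_i,b_i\ge 0$. When $l\ge 1$, the hypotheses $r\le b_0-1$, $s_0\le e_0-1$, and $s_i\le e_i$ for $1\le i\le l-1$ telescope to
$$
r+\sum_{i<l}s_ib_i\le (b_0-1)+(e_0-1)b_0+\sum_{i=1}^{l-1}e_ib_i=\Bigl(\sum_{j<l}e_jb_j\Bigr)-1\le b_l-1,
$$
where the last inequality is hypothesis \eqref{E:cond} with $i=l$. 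The terminal step $l=0$ is handled separately: there $\pi_0$ is applied to $r+s_0b_0$, and since $0\le r\le b_0-1$ by hypothesis, $\pi_0$ returns $r$, again a decrease by exactly $s_0b_0$.

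I do not anticipate a real obstacle; the argument is entirely bookkeeping with the inequalities \eqref{E:cond}. The one subtle point is the ``$-1$'' appearing in the telescoped bound, which comes from combining the two \emph{strict} hypotheses $r\le b_0-1$ and $s_0\le e_0-1$ into $r+s_0b_0\le (b_0-1)+(e_0-1)b_0=e_0b_0-1$; this slack is precisely what keeps $r+\sum_{i<l}s_ib_i$ strictly below $b_l$ after the remaining terms $\sum_{1\le i<l}s_ib_i$ are added and \eqref{E:cond} is invoked. It is also worth being careful that \eqref{E:cond} is only available for indices in the range $1\le i\le m$, which is why the terminal step $l=0$ of the induction must be treated on its own, using the hypothesis on $r$ directly.
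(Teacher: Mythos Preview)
Your proof is correct and follows essentially the same downward induction on $l$ as the paper, with the same key inequality $r+\sum_{i<l}s_ib_i\le\bigl(\sum_{j<l}e_jb_j\bigr)-1\le b_l-1$ derived from \eqref{E:cond}. Your treatment is in fact slightly more careful than the paper's: you correctly separate the terminal step $l=0$, where the general bound would not apply as written and one must use the hypothesis $0\le r\le b_0-1$ directly.
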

\begin{proof}
We induct downwards on $l$.  
We have  $r+\sum_{i\leq l-1} s_ib_i\leq b_0-1  +(\sum_{i\leq l-1} e_ib_i) -b_0\leq b_l-1 $
by Equation~\ref{E:cond}.  Since $r+\sum_{i\leq l-1} s_ib_i\geq 0$, it must be that
$r+\sum_{i\leq l-1} s_ib_i\geq 0$ is the standard reduction of $r+\sum_{i\leq l} s_ib_i$ modulo $b_l$.
\end{proof}

We first see that $\phi$ restricts to a map as in Equation~\ref{E:map}. Suppose for contradiction that
for some $\lambda\in \mathcal R_{A^{k+b_0}B_0^{e_0-1}B_1^{e_1}\cdots B_m^{e_m}}\setminus 
\mathcal R_{A^{k}(b_0A)B_0^{e_0-1}B_1^{e_1}\cdots B_m^{e_m}}$, we have $\phi(\lambda)=\mu\geq 1^{k}b_0^{e_0}b_1^{e_1}\cdots b_m^{e_m}$.
Let the elements of $\mu$ be $\mu_j=r_j+\sum_i s_{j,i} b_i$, with $r_j$ and $s_{j,i}$ non-negative integers,
not all $0$ for a fixed $j$, such that  $\sum_j r_j=k$, and $\sum_j s_{j,i}=e_i$ for all $i$.
We have that $\pi_m(\mu_j)$ reduces $\mu_j$ by at least $s_{j,m} b_m$, and since
we know by Lemma~\ref{L:key} that the total reduction of elements of $\phi(\lambda)=\mu$ is exactly
$e_mb_m$, it must be that $\pi_m(\mu_j)$ reduces $\mu_j$ by exactly $s_{j,m} b_m$.
Similarly, we make the same argument for the successively applied $\pi_{m-1},\dots,\pi_0$, but then we have a contradiction as
$\pi_0$ reduces the elements of $\pi_{1}\circ\cdots\circ \pi_m (\mu)$ by at least $e_0 b_0$ total,
but the elements of $\pi_{1}\circ\cdots\circ \circ \phi(\lambda)$ by $(e_0-1) b_0$ by Lemma~\ref{L:key}.

Next we see that $\phi$ gives a bijection in Equation~\ref{E:map}.  In fact, Lemma~\ref{L:key} has the following corollary.
\begin{corollary}\label{C:inv}
Let $0\leq r,r'\leq b_0-1$, and $0\leq s_0,s_0'\leq e_0-1,$ and $0\leq s_i,s_i'\leq e_i$ for $i\geq 1$. If
$$
 r+\sum_{i} s_ib_i = r'+\sum_{i} s_i'b_i,
$$
then $r=r'$ and $s_i=s_i'$ for all $i$.
\end{corollary}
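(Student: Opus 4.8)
The plan is to deduce this as a direct consequence of Lemma~\ref{L:key}, applied to the two representations $r + \sum_i s_i b_i$ and $r' + \sum_i s_i' b_i$ of the common integer $N$. First I would observe that both tuples $(r, s_0, \dots, s_m)$ and $(r', s_0', \dots, s_m')$ satisfy exactly the hypotheses required by Lemma~\ref{L:key}: $0 \le r, r' \le b_0 - 1$, $0 \le s_0, s_0' \le e_0 - 1$, and $0 \le s_i, s_i' \le e_i$ for $i \ge 1$. Hence Lemma~\ref{L:key} tells us that applying $\pi_m$ to $N$ reduces it by exactly $s_m b_m$, and also by exactly $s_m' b_m$; since $b_m \ge 1$, this forces $s_m = s_m'$.

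Next I would iterate downward. Having established $s_j = s_j'$ for all $j > l$, the integer $\pi_{l+1} \circ \cdots \circ \pi_m(N) = r + \sum_{i \le l} s_i b_i = r' + \sum_{i \le l} s_i' b_i$ is a single well-defined value, and Lemma~\ref{L:key} says that $\pi_l$ reduces it by exactly $s_l b_l$ (reading the first tuple) and by exactly $s_l' b_l$ (reading the second tuple), so $s_l = s_l'$. Carrying this induction down through $\pi_0$ gives $s_i = s_i'$ for all $0 \le i \le m$, and then $r = N - \sum_i s_i b_i = N - \sum_i s_i' b_i = r'$ follows immediately.

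Since the entire argument is just a repeated invocation of Lemma~\ref{L:key}, there is really no substantive obstacle here; the only point requiring a moment's care is making sure that the intermediate quantity $\pi_{l+1} \circ \cdots \circ \pi_m$ applied to the common integer is genuinely the same whether we track it via the unprimed or the primed data — but this is automatic once the higher $s_i$ have been matched, since both expressions then literally denote the same integer. One could alternatively phrase the whole thing non-inductively: the total reduction $\sum_i s_i b_i = \sum_i s_i' b_i$ is forced by Lemma~\ref{L:key} to be computed "digit by digit" in a mixed-radix-like fashion relative to the moduli $b_i$, and the bound \eqref{E:cond} is precisely what guarantees no carrying occurs, so the representation is unique. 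I would write the inductive version for cleanliness.
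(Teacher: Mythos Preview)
Your proposal is correct and follows essentially the same approach as the paper: the paper's proof is the one-line ``We successively apply $\pi_m,\dots,\pi_0$ to obtain $s_i=s_i'$ by Lemma~\ref{L:key}, and the final remainder is $r=r'$,'' and your argument is precisely the fleshed-out version of this, making explicit the downward induction and the observation that $\pi_l$ applied to the common value forces $s_l b_l = s_l' b_l$.
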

\begin{proof}
We successively apply $\pi_m,\dots,\pi_0$ to obtain $s_i=s_i'$ by Lemma~\ref{L:key}, and the final remainder is $r=r'$.
\end{proof}
So, we see that if $\lambda,\lambda'\in \mathcal R_{A^{k+b_0}B_0^{e_0-1}B_1^{e_1}\cdots B_m^{e_m}}\setminus 
\mathcal R_{A^{k}(b_0A)B_0^{e_0-1}B_1^{e_1}\cdots B_m^{e_m}},$ with $\phi(\lambda)=\phi(\lambda')$ then $\lambda=\lambda'$,
for if $e,e'$ are elements of $\lambda,\lambda'$ respectively, then $\phi(e)=\phi(e')$ implies $e=e'$ by Corollary~\ref{C:inv}.

Finally, Corollary~\ref{C:inv} implies that $\phi$  in Equation~\ref{E:map} preserves multiplicity sequences of partitions, as for
$\lambda\in \mathcal R_{A^{k+b_0}B_0^{e_0-1}B_1^{e_1}\cdots B_m^{e_m}}\setminus 
\mathcal R_{A^{k}(b_0A)B_0^{e_0-1}B_1^{e_1}\cdots B_m^{e_m}}$ the application of $\phi$ does not make any two unequal elements of $\lambda$ equal.

Since $w_\pi$ only depends on the multiplicity sequence of $\pi$, we have
\begin{align*}
\wbar_{1^{k+b_0}b_0^{e_0-1}b_1^{e_1}\cdots b_m^{e_m}}-\wbar_{1^{k}b_0^{e_0}b_1^{e_1}\cdots b_m^{e_m}}&=
\sum_{\lambda \in \mathcal R_{1^{k+b_0}b_0^{e_0-1}b_1^{e_1}\cdots b_m^{e_m}}\setminus
\mathcal R_{1^{k}b_0^{e_0}b_1^{e_1}\cdots b_m^{e_m}}} w_\lambda\\
&= \sum_{\mu \in \mathcal R_{A^{k+b_0}B_0^{e_0-1}B_1^{e_1}\cdots B_m^{e_m}}\setminus 
\mathcal R_{A^{k}(b_0A)B_0^{e_0-1}B_1^{e_1}\cdots B_m^{e_m}}} w_\mu\\
& =   \wbar_{A^{k+b_0}B_0^{e_0-1}B_1^{e_1}\cdots B_m^{e_m}} -  \wbar_{A^{k}(b_0A)B_0^{e_0-1}B_1^{e_1}\cdots B_m^{e_m}}.
\end{align*}

By Lemmas~\ref{L:1kdformal} and \ref{L:pr}, we have that $\wbar_{A^{k+b_0}B_0^{e_0-1}B_1^{e_1}\cdots B_m^{e_m}}=q^{k+b_0-1+\sum_i e_i}$ and $\wbar_{A^{k}(b_0A)B_0^{e_0-1}B_1^{e_1}\cdots B_m^{e_m}}=
q^{k+\sum_i e_i}.$
By induction, we have that $\wbar_{1^{k+b_0}b_0^{e_0-1}b_1^{e_1}\cdots b_m^{e_m}}=q^{k+b_0-1+\sum_i e_i},$
and so Theorem~\ref{T:main} follows.

\section{In the Grothendieck ring of varieties}\label{S:groth}

\label{s:Gring}

Let  \highlight{$\k$} be a field.  
The Grothendieck ring of varieties $\cM := K_0(\var_{\k})$
is defined as follows.  As an abelian group, it is generated by the
classes of finite type $\k$-schemes up to isomorphism.  The class of
a scheme $X$ in $\cM$ is denoted \highlight{$[X]$}.  
The group relations are generated by
the following ``cut and paste'' relations: if $Y$ is a closed subscheme of $X$, and $U$ is its
(open) complement, then $[X] = [U] + [Y]$. 
 The product $[X][Y] :=[X\times_{\k} Y]$ makes $\cM$
into a commutative ring.  %, with $[\Spec \k]$ as unit.  
%We let  $\L:=[\A^1]$. %and $\cM_\L$ be the localization of 
%$\cM$ at $\L$, i.e. $\cM[\L^{-1}]$.   

For a partition $\lambda=a_1^{e_1}\cdots a_m^{e_m}$ with $a_i$ distinct, we  
define  $w_\l( X)$ 
to be  the open  subscheme of
$\prod_i \Sym^{e_i}X$ in which all the points are distinct, i.e.\ the
complement of the ``big diagonal''. 
(Note that taking $\k=\F_q$ applying the $\F_q$-point counting functor to
$w_{\l}(\A^1)$ recovers the integer $w_\l$ defined above.)
Define  \highlight{$\overline{w}_\lambda(X) = \sum_{\l'\geq \lambda }
  [w_{\l'}(X)]$}.

Let \highlight{$\zzeta_X(t) :=\sum_{n\geq 0} [\Sym^n X]t^n
  \in \cM[[t]]$} be the {\em motivic zeta function} (defined by
Kapranov, \cite[(1.3)]{Kapranov2000}).  If $\k =\F_q$, then
the $\F_q$-point counting functor sends $\zzeta_X(t)$ to the Weil zeta function $\zeta_X(s)$, where
$t=q^{-s}$. For a partition $\l$, we define
$
\Kbar_{X,1^\bullet \l}(t):=\sum_{j} \wbar_{1^j \l} (X) t^j\in \cM[[t]].
$
See \cite[1.1-1.11 and Section 2]{vakil-wood} for a more detailed introduction
to the above topics in this context.

As the motivic analog of Theorem~\ref{T:main}, for $\l=b_0^{e_0}\cdots b_m^{e_m}$ satisfying the
hypotheses of Theorem~\ref{T:main}, we will determine $\Kbar_{X,1^\bullet \l}(t)$ in terms of $\zzeta_X(t)$.
The following will replace Lemma~\ref{L:1kdformal}.

\begin{lemma}[Proposition 5.9(b) of \cite{vakil-wood}]\label{P:base}
For an integer $a>1$, we have $$\Kbar_{X,1^{\bullet} a}(t)=t^{-a} \zzeta_X(t) (1-1/\zzeta_X(t^a)).$$
\end{lemma}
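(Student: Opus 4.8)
The plan is to mimic the generating-function argument in the proof of Lemma~\ref{L:1kdformal}, but carried out in the ring $\cM[[t]]$ rather than with integers. First I would introduce the motivic analog of the sequence $c_n$: let $C_X(t):=\sum_n [c_n(X)] t^n$, where $c_n(X)$ is the open subscheme of $\Sym^n X$ parametrizing effective zero-cycles (configurations) in which every point appears with multiplicity at most $a-1$. The key structural fact is the motivic version of unique factorization $f = g\cdot h^a$: every effective zero-cycle of degree $N$ decomposes uniquely as (a cycle with all multiplicities $\le a-1$) plus $a$ times (an arbitrary effective cycle). I would want to promote this to a scheme-level statement, namely an isomorphism $\Sym^N X \cong \coprod_{m + a n = N} c_m(X) \times \Sym^n X$, or at least an identity in $\cM$ via cut-and-paste. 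Summing over $N$ gives the generating function identity $\zzeta_X(t) = C_X(t)\cdot \zzeta_X(t^a)$, the exact analog of $(1-tq)^{-1} = \sum_{n,m} c_m d_n t^{m+an}$.

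From $C_X(t) = \zzeta_X(t)/\zzeta_X(t^a)$ I would then extract $\Kbar_{X,1^\bullet a}(t)$. The relation to use is the motivic analog of $\wbar_{1^k a} = \wbar_{1^{k+a}} - c_{k+a}$: since $\wbar_{1^k a}(X)$ counts configurations with at least one point of multiplicity at least $a$ (among configurations of size $k+a$), we have $[\wbar_{1^k a}(X)] = [\Sym^{k+a} X] - [c_{k+a}(X)]$. Packaging this over $k\ge 0$: $\Kbar_{X,1^\bullet a}(t) = \sum_{k\ge 0}\bigl([\Sym^{k+a}X] - [c_{k+a}(X)]\bigr)t^k = t^{-a}\bigl(\zzeta_X(t) - C_X(t)\bigr)$, where the $t^{-a}$ shift is harmless since both $\zzeta_X$ and $C_X$ have no terms of degree below $a$ in their difference (the degrees $0,\dots,a-1$ coefficients of $\zzeta_X$ and $C_X$ agree). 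Substituting $C_X(t) = \zzeta_X(t)/\zzeta_X(t^a)$ yields exactly $t^{-a}\zzeta_X(t)\bigl(1 - 1/\zzeta_X(t^a)\bigr)$, as claimed. I should note that $1/\zzeta_X(t^a)$ makes sense in $\cM[[t]]$ because $\zzeta_X(t^a) = 1 + (\text{higher order})$ is a unit.

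The main obstacle I expect is justifying the factorization $f = g h^a$ at the level of schemes (or at least in $\cM$) rather than just on $\k$-points, since the whole point of working in the Grothendieck ring is that point-counting over $\F_q$ is only a specialization. The cleanest route is probably to exhibit the decomposition map $\Sym^m X \times \Sym^n X \to \Sym^{m+an} X$, $(D, E)\mapsto D + aE$, observe it is a locally closed immersion onto the stratum where the "multiplicity-$\le a-1$ part" has degree exactly $m$, check these strata are locally closed and cover $\Sym^{m+an}X$ disjointly, and invoke the cut-and-paste relations; this is the same kind of stratification argument used implicitly in \cite{vakil-wood}, and indeed the Lemma is quoted as Proposition~5.9(b) there, so I would cite that proof for the scheme-theoretic details and present here the generating-function manipulation that deduces the stated closed form. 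A secondary point to be careful about is the hypothesis $a > 1$: for $a = 1$ every configuration has the forbidden multiplicity vacuously, $c_n(X) = \emptyset$ for $n\ge 1$, and the formula degenerates (it would read $\Kbar_{X,1^\bullet 1}(t) = t^{-1}\zzeta_X(t)$, which is not a power series), so the restriction $a>1$ is genuinely needed and should be flagged.
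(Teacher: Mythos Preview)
Your approach is correct. Note, however, that the paper does not actually prove this lemma: it is stated with a bare citation to \cite[Proposition~5.9(b)]{vakil-wood} and no argument is given. What you have written is precisely the motivic lift of the generating-function proof the paper \emph{does} give for Lemma~\ref{L:1kdformal} (which is itself attributed to the same source), so in spirit you are reconstructing the cited proof rather than offering an alternative. The one substantive step, as you correctly identify, is promoting the decomposition $D\mapsto (D',E)$ with $D=D'+aE$ to a locally closed stratification of $\Sym^N X$ so that the identity $\zzeta_X(t)=C_X(t)\,\zzeta_X(t^a)$ holds in $\cM[[t]]$; everything else is formal. Your remark that $\zzeta_X(t)$ and $C_X(t)$ agree through degree $a-1$ (so the $t^{-a}$ shift is legitimate) and your observation about $a>1$ are both to the point.
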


\begin{theorem}[Refinement of Theorem~\ref{T:main} in the Grothendieck ring]\label{T:groth}
For a variety $X$ over $\k$, and integers $m\geq -1,$ and $k\geq 0$, and $b_i,e_i\geq 1$ for $0\leq i\leq m$, such that each $1\leq i\leq m$, we have
$b_i\geq\sum_{j<i} e_jb_j$, we have
\begin{enumerate}
 \item \label{E:rec}
for formal variables $A,B_i$, and $m\geq 0$, we have
 \begin{align*}
\wbar_{1^{k}b_0^{e_0}b_1^{e_1}\cdots b_m^{e_m}}(X)=
\wbar_{1^{k+b_0}b_0^{e_0-1}b_1^{e_1}\cdots b_m^{e_m}}(X)
 -  \wbar_{A^{k+b_0}B_0^{e_0-1}B_1^{e_1}\cdots B_m^{e_m}}(X) +  \wbar_{A^{k}(b_0A)B_0^{e_0-1}B_1^{e_1}\cdots B_m^{e_m}}(X),
\end{align*}
and
$$
\Kbar_{X,1^{\bullet} b_0^{e_0}b_1^{e_1}\cdots b_m^{e_m}} (t)
 =  \Kbar_{X,1^{\bullet} b_0^{e_0-1}b_1^{e_1}\cdots b_m^{e_m}} (t) t^{-{b_0}} - \frac {\zzeta_X(t)t^{-b_0}} {\zzeta_X(t^{b_0})}
\left[  \Sym^{e_0-1} X \times \prod_{i=1}^m \Sym^{e_i} X \right].$$
\item\label{E:formula} $$
\overline{K}_{X,1^{\bullet} b_0^{e_0}b_1^{e_1}\cdots b_m^{e_m}}(t) = t^{-\sum_i e_ib_i}\left( \zzeta_X(t)-
\sum_{i=0}^{m} \frac{\zzeta_X(t)}{\zzeta_X(t^{b_i})}  \prod_{l=i+1}^m [\Sym^{e_l} X] t^{e_lb_l} \sum_{j=0}^{e_i-1} [\Sym^{j}X ]t^{jb_i}
\right).
$$
\end{enumerate}
\end{theorem}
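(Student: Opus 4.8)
The plan is to run the proof of Theorem~\ref{T:main} inside the Grothendieck ring $\cM=K_0(\var_{\k})$, then repackage the resulting identity as generating functions in $t$, and finally extract part~\eqref{E:formula} by induction on $\sum_i e_i$. For the identity for $\wbar(X)$ in part~\eqref{E:rec}, note that the combinatorial core of the proof of Theorem~\ref{T:main} — Lemma~\ref{L:key}, Corollary~\ref{C:inv}, and the verification that $\phi$ restricts to the multiplicity-sequence-preserving bijection~\eqref{E:map} — is a statement about refinement posets of partitions and is used verbatim. Since the locally closed subscheme $w_\pi(X)\subseteq\prod_i\Sym^{e_i}X$ depends up to isomorphism only on the multiplicity sequence of $\pi$, summing that bijection over the two poset complements yields exactly the chain of equalities at the end of Section~\ref{S:main} read in $\cM$, and rearranging gives the first displayed identity of part~\eqref{E:rec}.

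For the recursion for $\Kbar_X$, I would apply the $\wbar(X)$-identity with $k$ replaced by $j$, multiply by $t^j$, and sum over $j\ge 0$; the left side is $\Kbar_{X,1^{\bullet}b_0^{e_0}\cdots b_m^{e_m}}(t)$. Of the three terms on the right: reindexing turns $\sum_j\wbar_{1^{j+b_0}b_0^{e_0-1}\cdots}(X)t^j$ into $t^{-b_0}\bigl(\Kbar_{X,1^\bullet b_0^{e_0-1}\cdots}(t)-\sum_{j=0}^{b_0-1}\wbar_{1^{j}b_0^{e_0-1}\cdots}(X)t^j\bigr)$; the two terms built from the formal variables $B_i$ are evaluated by the motivic analog of Lemma~\ref{L:pr}, namely $\wbar_{A^{j}B_0^{e_0-1}B_1^{e_1}\cdots B_m^{e_m}}(X)=[\Sym^{j}X][\Sym^{e_0-1}X]\prod_{i=1}^m[\Sym^{e_i}X]$ and $\wbar_{A^{j}(b_0A)B_0^{e_0-1}\cdots}(X)=\wbar_{1^jb_0}(X)\,[\Sym^{e_0-1}X]\prod_{i=1}^m[\Sym^{e_i}X]$, then summed against $t^j$ using $\sum_j[\Sym^jX]t^j=\zzeta_X(t)$ and Lemma~\ref{P:base}. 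Collecting, the contribution of the $B_i$-terms is $t^{-b_0}[\Sym^{e_0-1}X]\prod_{i=1}^m[\Sym^{e_i}X]\bigl(\sum_{j=0}^{b_0-1}[\Sym^jX]t^j-\zzeta_X(t)/\zzeta_X(t^{b_0})\bigr)$, and its truncated sum cancels the one left over from the reindexing, because for $0\le j\le b_0-1$ the map $\phi$ restricts (again by Lemma~\ref{L:key} and Corollary~\ref{C:inv}, now with no ``worse'' subset to delete) to a multiplicity-sequence-preserving bijection $\mathcal{R}_{A^{j}B_0^{e_0-1}B_1^{e_1}\cdots B_m^{e_m}}\to\mathcal{R}_{1^{j}b_0^{e_0-1}b_1^{e_1}\cdots b_m^{e_m}}$, giving $\wbar_{1^{j}b_0^{e_0-1}\cdots}(X)=[\Sym^jX][\Sym^{e_0-1}X]\prod_{i=1}^m[\Sym^{e_i}X]$. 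Using $[\Sym^{e_0-1}X]\prod_{i=1}^m[\Sym^{e_i}X]=[\Sym^{e_0-1}X\times\prod_{i=1}^m\Sym^{e_i}X]$, what remains is precisely the stated recursion.

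For part~\eqref{E:formula}, I would induct on $\sum_i e_i$, the base case $m=-1$ being $\Kbar_{X,1^\bullet\emptyset}(t)=\sum_j[\Sym^jX]t^j=\zzeta_X(t)$, which matches the formula. In the inductive step, feed in the recursion: if $e_0\ge 2$, apply the inductive hypothesis to $b_0^{e_0-1}b_1^{e_1}\cdots b_m^{e_m}$ (the $b_i$ are unchanged, so the inequalities survive); if $e_0=1$, the reduced partition is $b_1^{e_1}\cdots b_m^{e_m}$, to which the hypothesis applies after relabeling (the inequalities only relax). Substituting the $(e_0-1)$-formula, multiplying by $t^{-b_0}$, and subtracting $\tfrac{\zzeta_X(t)t^{-b_0}}{\zzeta_X(t^{b_0})}[\Sym^{e_0-1}X\times\prod_{i=1}^m\Sym^{e_i}X]$ should recover the claimed formula: the subtracted term is exactly what promotes the $i=0$ summand from $\sum_{j=0}^{e_0-2}$ to $\sum_{j=0}^{e_0-1}$, and the powers of $t$ match using $\sum_i e_ib_i=b_0+\bigl((e_0-1)b_0+\sum_{i\ge 1}e_ib_i\bigr)$ together with $\prod_{i=1}^m[\Sym^{e_i}X]=\bigl(\prod_{i=1}^m[\Sym^{e_i}X]t^{e_ib_i}\bigr)t^{-\sum_{i\ge 1}e_ib_i}$.

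The one genuinely new ingredient, and the step I expect to be the main obstacle, is the motivic analog of Lemma~\ref{L:pr}: the point-counting argument (assigning a ``profile'' to each monic irreducible) has no direct counterpart in $\cM$, so instead I would argue by stratification. First one shows $[\Sym^nX]=\wbar_{B^n}(X)=\sum_{\nu\vdash n}[w_\nu(X)]$ from the stratification of $\Sym^nX$ by point type, and likewise that $\prod_i\Sym^{e_i}X$ stratifies into configuration spaces $w_\nu(X)$ indexed by ``colored'' partitions; then, using that a refinement $\mu\ge A^k(b_0A)B_0^{e_0-1}\cdots$ is the same as a refinement $\mu\ge A^{k+b_0}B_0^{e_0-1}\cdots$ having some element with $A$-coefficient $\ge b_0$, one identifies the excluded strata as having total class $C_{k+b_0}\,[\Sym^{e_0-1}X]\prod_{i\ge 1}[\Sym^{e_i}X]$, where $C_n$ is the class of the locus in $\Sym^nX$ with all multiplicities $\le b_0-1$ (so $\sum_nC_nt^n=\zzeta_X(t)/\zzeta_X(t^{b_0})$, by the factorization $f=gh^{b_0}$). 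This gives $\wbar_{A^k(b_0A)B_0^{e_0-1}\cdots}(X)=([\Sym^{k+b_0}X]-C_{k+b_0})[\Sym^{e_0-1}X]\prod_{i\ge 1}[\Sym^{e_i}X]=\wbar_{1^kb_0}(X)\,[\Sym^{e_0-1}X]\prod_{i\ge 1}[\Sym^{e_i}X]$. Getting this stratification exactly right, and lining it up with the truncation-at-$j\le b_0-1$ bookkeeping of the previous paragraphs, is the delicate part; the algebra in part~\eqref{E:formula}, though routine, still requires care with the $t$-exponents and the $e_0=1$ case.
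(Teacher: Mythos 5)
Your proposal is correct and follows essentially the same route as the paper's (very terse) proof: carry the bijection argument of Theorem~\ref{T:main} into $K_0(\var_{\k})$ using that $[w_\pi(X)]$ depends only on the multiplicity sequence of $\pi$, multiply by $t^k$ and sum using the motivic product rule and Lemma~\ref{P:base}, then induct on $\sum_i e_i$ for part~\eqref{E:formula}. You also correctly supply two details the paper leaves implicit: the cancellation of the truncated sums $\sum_{j=0}^{b_0-1}$ via the multiplicity-sequence-preserving bijection in the range $j\le b_0-1$, and the stratification proof of the Grothendieck-ring analog of Lemma~\ref{L:pr}.
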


In the special cases when $m=1$ or both $m=2$ and $e_0=1$, Theorem~\ref{T:groth} reduces to \cite[Lemma 5.18, Proposition 5.19, Example 5.20]{vakil-wood}.  Taking $\k=\F_q$ and applying the $\F_q$-point counting functor to
Theorem~\ref{T:groth} \eqref{E:formula} with $X=\A^1$ gives Theorem~\ref{T:main} (using the basic fact
that $\Sym^a \A^1$ has $q^a$ points for $a\geq 0$).  As $[\Sym^r \A^d]=[\A^{rd}]$ (e.g. see \cite[Lemma 4.4]{Got01}), 
Theorem~\ref{T:groth} \eqref{E:formula} with $X=\A^d$ gives a very similar result to that of $X=\A^1$, just
with each $[\A^s]$ replaced by $[\A^{sd}]$, and we have
\begin{equation}\label{E:simple}
 \wbar_{1^k{b_0}^{e_0}\cdots{b_m}^{e_m}}(\A^d) = [\A^{d(k+\sum_i e_i)}]
\end{equation}
for any $d\geq 0$.
 These are the cases that motivate Conjecture~\ref{C:top}
(see \cite[1.41-1.44]{vakil-wood} for more details about this motivation).

\begin{proof}
 The first part of \eqref{E:rec} follows exactly as the same statement in the proof of Theorem~\ref{T:main}.
The second follows by multiplying both sides of the first by $t^k$, summing over $k$, and applying Lemma~\ref{L:pr} (which has an analogous proof in the Grothendieck ring setting) and Lemma~\ref{P:base}.
Finally, \eqref{E:formula} is proven inductively using Lemma~\ref{P:base} as a base case and the second part of 
\eqref{E:rec} for the inductive step.
\end{proof}

\bibliographystyle{alpha}
\bibliography{myrefs.bib}

\end{document}